\newtheorem{theorem}{Theorem}[section]
\newtheorem{corollary}[theorem]{Corollary}
\newtheorem{proposition}[theorem]{Proposition}
\newtheorem{problem}[theorem]{Problem}
\theoremstyle{definition}
\newtheorem{step}{Step}
\numberwithin{equation}{section}
\newcommand{\Set}[2]{
\left\{#1\,\middle|\,#2\right\}}
\newcommand{\R}{\mathbb{R}}
\newcommand{\M}{\mathcal{M}}
\newcommand{\B}{\mathcal{B}}
\newcommand{\X}{\mathcal{X}}
\newcommand{\Y}{\mathcal{Y}}
\newcommand{\dGH}{d_{\mathrm{GH}}}
\newcommand{\dis}{\mathop{\mathrm{dis}}\nolimits}
\newcommand{\diam}{\mathop{\mathrm{diam}}\nolimits}
\newcommand{\GHC}{\mathop{\mathrm{GHC}}\nolimits}
\newcommand{\Sum}{\mathop{\mathfrak{S}}\nolimits}
\begin{document}

\title{An Isometric Embedding of the $\ell^\infty$ product space of two bounded subspaces of the Gromov-Hausdorff Space into the Gromov-Hausdorff Space}

\author{Takuma Byakuno
\footnote{Institute of Mathematics\\ 
Graduate School of Science and Engineering, Kansai University\\
3-3-35 Suita, 564-8680 Osaka, Japan\\
E-mail: k241344@kansai-u.ac.jp}}

\date{}

\maketitle


\renewcommand{\thefootnote}{}

\footnote{2020 \emph{Mathematics Subject Classification}: Primary 30L05; Secondary 54E35, 53C23.}

\footnote{\emph{Key words and phrases}: Gromov-Hausdorff space, Isometric embeddings.}

\renewcommand{\thefootnote}{\arabic{footnote}}
\setcounter{footnote}{0}


\begin{abstract}
In this paper, 
we prove the $\ell^\infty$ product space of two bounded subspaces of the Gromov-Hausdorff space can be isometrically embedded into the Gromov-Hausdorff space.
\end{abstract}

\section{Introduction}
In 1909, Fr\'{e}chet \cite[pp.161--162]{Frechet} showed that an arbitrary separable metric space can be isometrically embedded into the metric space of all real-valued and bounded sequences $(x_n)_{n=1}^\infty=(x_1,x_2,\ldots,x_n,\ldots)$, which is usually denoted by $\ell^\infty$ and equipped with the supremum distance:
\[
\mathrm{the\ distance\ between}\ (x_n)_{n=1}^\infty\ \mathrm{and}\ (y_n)_{n=1}^\infty:
\sup_{n=1,2,\ldots}|x_n-y_n|.
\]
Indeed, we can define the isometric embedding of a separable metric space $(X,d)$ into $\ell^\infty$ by $x\mapsto (d(x,x_n)-d(x_n,x_1))_{n=1}^\infty$ for $x\in X$, where $\{x_1,x_2,\ldots\}$ is a dense subset of $X$.\\

In 1927, Urysohn \cite{Urysohn} constructed the separable complete metric space $U$ satisfying the following:
\begin{itemize}
\item
For an arbitrary separable metric space $X$, there exists a suitable subspace $M$ of $U$ such that $M$ is isometric to $X$,
\item
An arbitrary isometry between finite subsets of $U$ can be extended to an isometry on $U$.
\end{itemize}
Additionally, $U$ is unique up to an isomorphism and called
the {\it Urysohn universal space} today.\\

In 1932, Banach \cite[p.187]{Banach} showed that an arbitrary separable metric space can be isometrically embedded into the metric space of all continuous functions on the closed interval $[0,1]$. 
It is a Banach space usually denoted by $C([0,1])$ and equipped with the supremum distance:
\[
\mathrm{the\ distance\ between}\ f\ \mathrm{and}\ g:
\sup_{0\leq t\leq 1}|f(t)-g(t)|.
\]
This fact was proved as an application of Fr\'{e}chet's isometric embedding and Banach-Mazur theorem \cite[p.185]{Banach}, which states that every Banach space can be isometrically embedded into $C([0,1])$.
This space $C([0,1])$ is not isometric to the Urysohn universal space according to Mazur-Ulam theorem, which states that all surjective isometries between normed spaces are affine. 
However, in 2008, Dutrieux and Lancien \cite{Dutrieux} showed that a Banach space into which all compact metric spaces can be isometrically embedded has a linearly isometric copy of $C([0,1])$.\\

Generally, a metric space $X$ is {\it universal} for a class $C$ consisting of metric spaces if all $Y\in C$ can be isometrically embedded into $X$. 
The above works focus on the universality for a class of all separable metric spaces.\\

On the other hand, in 2017, Iliadis, Ivanov, and Tuzhilin \cite{Tuzhilin1} showed that all finite metric spaces can be isometrically embedded into the Gromov-Hausdorff space, which means this space is universal for a class of all finite metric spaces.
The {\it Gromov-Hausdorff space} is a metric space consisting of a certain set $\M$ and Gromov-Hausdorff distance $\dGH$ defined later.
The set $\M$ is usually defined as the set of all isometry classes of  compact metric spaces, but we now consider $\M$ as a set characterized by
\begin{itemize}
\item
For every $X\in\M$, $X$ is a (nonempty) compact metric space,
\item
For an arbitrary compact metric space $X$,
we can find a suitable $Y\in\M$ such that $Y$ is isometric to $X$,
\item 
For every $X,Y\in\M$, we obtain $X=Y$ if $X$ is isometric to $Y$.
\end{itemize}
The Gromov-Hausdorff space $\M$ has a dense countable set consisting of isometric copies of all finite metric spaces endowed with rational valued distance functions and therefore $\M$ is separable. 
In addition, we can show that $\M$ is complete, but $\M$ is not isometric to the Urysohn universal space. 
See \cite[Section 3]{Tuzhilin1} and \cite[Main Theorem]{Tuzhilin2}.\\

According to \cite{Tuzhilin1}, there are many open problems related to the geometrical properties of $\M$. 
For instance, in this paper, we focus on
\begin{problem}
Can we isometrically embed all compact metric spaces into the Gromov-Hausdorff space $\M$? 
\end{problem}
Iliadis, Ivanov, and Tuzhilin's result can be considered as a partial answer to this problem, and this partial answer can be generalized to the following as an easy application of \cite[Theorem 4.1]{Tuzhilin1}.
\begin{proposition}
\label{EBs2GHs}
All bounded metric subspaces of an $\ell^\infty$ normed space $\R^n$ can be isometrically embedded into the Gromov-Hausdorff space. 
\end{proposition}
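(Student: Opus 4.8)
The plan is to exhibit an explicit distance-preserving map $\Phi\colon B\to\M$, where $B$ denotes the given bounded subset of $(\R^n,\|\cdot\|_\infty)$. After a translation I may assume $B\subseteq[0,L]^n$ for some $L>0$. I would fix constants $K>2L$ and $D>4L+2nK$, and for a point $v=(v_1,\dots,v_n)\in B$ let $\Phi(v)$ be the finite metric space obtained by gluing $n$ two-point ``blocks'' $A_i=\{p_i,q_i\}$: inside $A_i$ I set $d(p_i,q_i)=2(v_i+iK)$, while any two points lying in distinct blocks are placed at the common distance $D$. The inequality $D>4L+2nK>L+nK$ makes every triangle inequality hold, so $\Phi(v)\in\M$; this ``large common part'' gadget is the sort of construction underlying the embedding of \cite[Theorem 4.1]{Tuzhilin1}.

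The model computation I would rely on is that two one- or two-point spaces of diameters $a,b$ are at Gromov–Hausdorff distance $\tfrac12|a-b|$. First I would establish the upper bound: the block-diagonal correspondence matching $A_i$ in $\Phi(v)$ with $A_i$ in $\Phi(w)$ has distortion $\max_i 2|v_i-w_i|$, because every cross-block pair sits at distance $D$ on both sides and so contributes nothing; hence $\dGH(\Phi(v),\Phi(w))\le\max_i|v_i-w_i|=\|v-w\|_\infty$.

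The main work, and the step I expect to be the obstacle, is the matching lower bound: every correspondence $R$ between $\Phi(v)$ and $\Phi(w)$ must satisfy $\dis(R)\ge 2\|v-w\|_\infty$. Correspondences with $\dis(R)\ge 2L$ are harmless since $2\|v-w\|_\infty\le 2L$. For the rest I would argue that once $\dis(R)$ drops below the threshold $\min\{D-2(L+nK),\,2(K-L)\}$, which is larger than $2L$ by the choice of $K$ and $D$, the correspondence is forced to be \emph{block-preserving}: if two points of one block corresponded to points of two different blocks, one pair would jump between a within-block distance and the cross distance $D$, creating distortion close to $D$. Thus $R$ induces a permutation $\sigma$ of the blocks, and restricting $R$ to $A_i$ versus $A_{\sigma(i)}$ together with the model computation yields $\dis(R)\ge 2\bigl|(v_i+iK)-(w_{\sigma(i)}+\sigma(i)K)\bigr|$ for each $i$. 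When $\sigma=\id$ the offsets cancel and this is $\ge 2\max_i|v_i-w_i|$; when $\sigma\ne\id$ some index has $\sigma(i)\ne i$, and the offset forces $\dis(R)\ge 2(K-L)>2L$. In either case $\dis(R)\ge 2\|v-w\|_\infty$.

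Combining the two bounds gives $\dGH(\Phi(v),\Phi(w))=\|v-w\|_\infty$, so $\Phi$ is an isometric embedding, injectivity being automatic. The only genuinely delicate point is the lower bound, and the block offsets $iK$ are introduced precisely to break the symmetry among the blocks, ensuring that the identity matching is the only competitive one; without them a permutation could wrongly identify distinct points such as $(0,1)$ and $(1,0)$.
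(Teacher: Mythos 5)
Your proof is correct, but it reaches Proposition \ref{EBs2GHs} by a genuinely different route than the paper does. The paper never gives a direct construction for this proposition: it is stated as an easy application of \cite[Theorem 4.1]{Tuzhilin1} (in essence, the local structure of $\M$ near suitably generic finite spaces is that of an $\ell^\infty$ ball), and the introduction remarks that it also follows from Corollary \ref{theorem3}, since a bounded subspace of $\R^1$ embeds isometrically into $\M$ via two-point spaces, after which induction on $n$ handles bounded subsets of $\ell^\infty$ $\R^n$. Your argument is instead self-contained, and the delicate points do check out: with $K>2L$ and $D>4L+2nK$ one has $D-2(L+nK)>2L$ and $2(K-L)>2L$, so any correspondence of distortion below $2L$ must send each block into a single block; the induced map on block indices is automatically a permutation because the correspondence surjects onto $\Phi(w)$; the restrictions $R\cap(A_i\times A_{\sigma(i)})$ are then correspondences of two-point spaces, where the $\tfrac12|a-b|$ formula gives the stated lower bounds, and the offset term $iK$ rules out $\sigma\neq\id$. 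It is worth observing that your gadget is essentially the paper's main construction $\Sum_{r,n}$ (proof of Theorem \ref{theorem1}) specialized to two-point blocks: you encode the coordinate $v_i$ as the diameter $2(v_i+iK)$ of a two-point space, and where the paper breaks the symmetry among blocks by an anchor pair $X_0$ together with cross-distances $5r|k-l|$ that grow with $|k-l|$, you break it by the diameter offsets $iK$ with a single constant cross-distance $D$. The trade-off is clear: the paper's machinery yields the much more general Theorem \ref{theorem1} and Corollary \ref{theorem3} (products of arbitrary bounded subspaces of $\M$, with arbitrary compact spaces as blocks), from which the proposition drops out; your argument proves exactly the proposition, but elementarily, with no reliance on \cite{Tuzhilin1} or on the general theorem, needing only the distance formula for two-point spaces.
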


In this paper, we will prove the next theorem.
\begin{theorem}
\label{theorem1}
Let $\M_{\leq r}, \B(r)$ $(r>0)$ be the following metric subspaces of the Gromov-Hausdorff space $\M${$\rm{:}$}
\[
\M_{\leq r}=\Set{Z\in\M}{\diam(Z)\leq r}\,,\,
\B(r)=\Set{Z\in\M}{\diam(Z)=r}\,,
\]
where $\diam(Z)$ is the diameter of a metric space $Z$.
Then the $n$th cartesian power $(\M_{\leq r})^n$ equipped with $\ell^\infty$ distance can be embedded into $\B(5rn)$.
Namely, we can construct a map $\Sum_{r,n}:(\M_{\leq r})^n\to\B(5rn)$ satisfying
\[
\dGH(\Sum_{r,n}(X),\Sum_{r,n}(Y))=\max_{k=1,2,\ldots,n}\dGH(X_k,Y_k)
\]
for all $X=(X_k)_{k=1}^n, Y=(Y_k)_{k=1}^n\in(\M_{\leq r})^n$.
\end{theorem}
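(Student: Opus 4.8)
The plan is to realize each tuple $X=(X_k)_{k=1}^n$ as a single compact space built from the disjoint union of the $X_k$ together with one auxiliary basepoint $o$, glued by a tree-type metric whose cross-block distances depend only on the indices. Concretely, I fix reals $r\le c_1<c_2<\cdots<c_n$ whose consecutive gaps exceed $r$ and whose two largest members satisfy $c_{n-1}+c_n=5rn$; the generous target diameter leaves ample room for both constraints. I then set $\Sum_{r,n}(X)=\{o\}\sqcup X_1\sqcup\cdots\sqcup X_n$ and equip it with $\rho(x,x')=d_{X_k}(x,x')$ for $x,x'\in X_k$, with $\rho(o,x)=c_k$ for $x\in X_k$, and with $\rho(x,y)=c_k+c_l$ for $x\in X_k,\ y\in X_l,\ k\ne l$. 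A routine case analysis (using only $c_k\ge r\ge\diam X_k$) shows that $\rho$ is a metric; moreover every within-block distance is at most $r$, while every base and cross-block distance is a constant at most $c_{n-1}+c_n=5rn$, the value $5rn$ being attained between the two top blocks. Hence $\diam\Sum_{r,n}(X)=5rn$ for every input, so $\Sum_{r,n}(X)\in\B(5rn)$.

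For the upper bound $\dGH(\Sum_{r,n}(X),\Sum_{r,n}(Y))\le\max_k\dGH(X_k,Y_k)$ I would take, for each $k$, an optimal correspondence $R_k$ between $X_k$ and $Y_k$ with $\dis(R_k)=2\dGH(X_k,Y_k)$, and glue them into $R=\{(o,o)\}\cup\bigcup_k R_k$, a correspondence between the two large spaces. Its distortion decomposes cleanly: pairs lying inside a single block contribute $\dis(R_k)$, whereas pairs involving $o$ or straddling two distinct blocks contribute $0$, since the distances involved ($c_k$ and $c_k+c_l$) depend only on indices and therefore agree verbatim on the $X$-side and the $Y$-side. Thus $\dis(R)=\max_k 2\dGH(X_k,Y_k)$, which gives the desired inequality.

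The main obstacle is the matching lower bound, where I must exclude correspondences that mix the blocks, and this is where the two structural features pay off. Given any correspondence $R$ with $\delta:=\dis(R)<2\max_k\dGH(X_k,Y_k)\le r$ (correspondences with larger distortion are irrelevant), I would first pin the basepoint by an eccentricity argument: the largest distance from $o$ to another point of $\Sum_{r,n}(X)$ is $c_n$, whereas every non-basepoint has largest distance at least $c_1+c_n$, and such eccentricities are preserved up to $\delta$ under any correspondence of distortion $\delta$. Since $\delta<r\le c_1$, any point paired with $o$ must again have eccentricity below $c_1+c_n$, forcing $o\leftrightarrow o$ exclusively on both sides. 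With the basepoints matched, the distance to $o$ becomes a rigid label for the blocks: if $x\in X_k$ is paired with $y$, then $\lvert c_k-\rho(o,y)\rvert\le\delta$, and because the heights are more than $\delta$ apart (the gaps exceed $r>\delta$) this forces $y$ into block $k$ of $Y$. Consequently $R\cap(X_k\times Y_k)$ is itself a correspondence between $X_k$ and $Y_k$ of distortion at most $\delta$, so $2\dGH(X_k,Y_k)\le\delta$ for every $k$; taking the maximum over $k$ contradicts $\delta<2\max_k\dGH(X_k,Y_k)$. This yields $\dGH(\Sum_{r,n}(X),\Sum_{r,n}(Y))\ge\max_k\dGH(X_k,Y_k)$, and combining it with the upper bound proves the theorem.

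I expect the only genuinely delicate point to be this alignment step, and in particular the verification that the basepoint must correspond to itself; the eccentricity estimate is what makes it rigorous, and it is the reason the construction needs a clearly central basepoint together with widely separated heights rather than a symmetric disjoint union, which would instead compute a distance minimized over permutations of the blocks.
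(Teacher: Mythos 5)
Your proof follows the same overall strategy as the paper's --- realize each tuple as a disjoint union of the blocks plus an anchor, with cross-block distances depending only on indices; glue blockwise correspondences for the upper bound; and, for the lower bound, show that any correspondence of small distortion must fix the anchor and respect the blocks, then restrict it blockwise --- but the concrete construction and the rigidity mechanism are genuinely different. The paper arranges the blocks along a chain (cross-block distance $5r|k-l|$) and uses a \emph{two-point} anchor $\{p^1,p^{-1}\}$ at mutual distance $3r$, which is recognizable because $3r$ is a forbidden distance everywhere else (all other pairs lie at distance $\le r$ or $\ge 5r$); it then needs a two-stage argument (Step 2: some permutation of blocks exists; Step 3: it is the identity). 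Your star-shaped metric with a single basepoint $o$ and heights $c_k$ lets the distance to $o$ label the blocks directly, so the permutation step disappears; the price is that $o$ itself must be pinned down, which your eccentricity argument does: $\mathrm{ecc}(o)=c_n$ while every other point has eccentricity at least $c_1+c_n\ge c_n+r$, and eccentricities transfer across a correspondence up to its distortion $\delta<r$. For $n\ge2$ this is correct and arguably cleaner than the paper's Steps 1--3.

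Two caveats. First, your construction and the eccentricity gap both presuppose $n\ge2$: for $n=1$ there is no pair $c_{n-1},c_n$ to normalize the diameter, and the eccentricity of $o$ (namely $c_1$) equals that of every point of the single block, so the basepoint is no longer pinned. The case $n=1$ needs a separate (easy) patch ruling out the ``flip'' $o\leftrightarrow X_1$: such a flip forces $\diam(X_1)\le\delta$ and $\diam(Y_1)\le\delta$, hence $2\dGH(X_1,Y_1)\le\delta$, a contradiction anyway. (The paper's two-point anchor handles $n=1$ uniformly, since its signature distance $3r$ exists regardless of the number of blocks.) Second, you invoke two facts beyond what the paper uses: the existence of an \emph{optimal} correspondence with $\dis(R_k)=2\dGH(X_k,Y_k)$ --- true for compact spaces but a nontrivial theorem, and avoidable exactly as in the paper by taking $R_k$ with $\dis(R_k)<2\varepsilon$ for $\varepsilon>m$ and letting $\varepsilon$ decrease to $m$ --- and the bound $2\dGH(X_k,Y_k)\le\max\{\diam(X_k),\diam(Y_k)\}\le r$, which you need to guarantee $\delta<r$; this is standard (use the full correspondence $X_k\times Y_k$) but is sharper than the inequality the paper records in its preliminaries, so it deserves a line of justification.
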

This theorem implies
\begin{corollary}
\label{theorem2}
Let $\X, \Y$ be bounded metric subspaces of the Gromov-Hausdorff space. 
Then the product space $\X\times\Y$ equipped with $\ell^\infty$ distance can be isometrically embedded into the Gromov-Hausdorff space.
\end{corollary}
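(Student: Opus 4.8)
The plan is to deduce the corollary directly from Theorem~\ref{theorem1} applied with $n = 2$, the point being that the $\ell^\infty$ distance on a product of two factors is exactly the two-variable maximum that appears in the isometry formula of that theorem. The only genuine content is to realize the abstract bounded subspaces $\X$ and $\Y$ as subsets of a single space $\M_{\leq r}$; once this is done, the restriction of the map $\Sum_{r,2}$ supplied by the theorem does all the work.

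First I would dispose of the degenerate case in which $\X$ or $\Y$ is empty, where the product is empty and embeds vacuously, and otherwise fix base points $A_0 \in \X$ and $B_0 \in \Y$. The main step is to pass from $\dGH$-boundedness to a uniform bound on diameters. Here I would invoke the standard inequality $|\diam(A) - \diam(B)| \leq 2\,\dGH(A, B)$ valid for compact metric spaces $A$ and $B$. Writing $R$ for the larger of the $\dGH$-diameters of $\X$ and $\Y$, which are finite by hypothesis, this yields $\diam(A) \leq \diam(A_0) + 2R$ for every $A \in \X$ and the analogous bound for $\Y$. Setting $r = \max(\diam(A_0), \diam(B_0)) + 2R$, and enlarging $r$ to a strictly positive value if necessary using $\M_{\leq r} \subseteq \M_{\leq r'}$ for $r \leq r'$, gives $\X \cup \Y \subseteq \M_{\leq r}$, and hence $\X \times \Y \subseteq (\M_{\leq r})^2$.

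It then remains to restrict the map $\Sum_{r,2}\colon (\M_{\leq r})^2 \to \B(10r)$ furnished by Theorem~\ref{theorem1} to $\X \times \Y$. For $X = (X_1, X_2)$ and $Y = (Y_1, Y_2)$ in $\X \times \Y$, the $\ell^\infty$ distance is $\max(\dGH(X_1, Y_1), \dGH(X_2, Y_2))$, which by the theorem equals $\dGH(\Sum_{r,2}(X), \Sum_{r,2}(Y))$. Thus the restriction is an isometric embedding of $(\X \times \Y, \ell^\infty)$ into $\B(10r) \subseteq \M$, which is precisely the claim.

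Given Theorem~\ref{theorem1}, the argument is short, so the only mild obstacle I anticipate is the passage from $\dGH$-boundedness to the uniform diameter bound; I expect this to rest entirely on the diameter inequality above and to present no real difficulty. All the substantive work lives in Theorem~\ref{theorem1} itself, which the corollary merely specializes and restricts.
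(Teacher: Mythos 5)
Your proposal is correct and follows the same route as the paper: use the $2$-Lipschitz continuity of $\diam$ (equivalently, the inequality $|\diam(A)-\diam(B)|\leq 2\,\dGH(A,B)$) to convert $\dGH$-boundedness of $\X$ and $\Y$ into a uniform diameter bound $r$, so that $\X\times\Y\subset(\M_{\leq r})^2$, and then restrict the map $\Sum_{r,2}$ from Theorem~\ref{theorem1}. The paper's proof is just a terser version of this, with the diameter-bound step already recorded in its preliminaries.
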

\begin{corollary}
\label{theorem3}
Let $X, Y$ be bounded metric spaces which can be isometrically embedded into the Gromov-Hausdorff space.
Then the product space $X\times Y$ equipped with $\ell^\infty$ distance can be isometrically embedded into the Gromov-Hausdorff space.
\end{corollary}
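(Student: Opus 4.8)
The plan is to reduce Corollary \ref{theorem3} to Corollary \ref{theorem2} by transporting everything through the given isometric embeddings. Since $X$ and $Y$ can be isometrically embedded into $\M$, I would fix isometric embeddings $\phi\colon X\to\M$ and $\psi\colon Y\to\M$, and set $\X:=\phi(X)$ and $\Y:=\psi(Y)$, regarded as metric subspaces of the Gromov-Hausdorff space. Because an isometric embedding preserves distances, the images $\X$ and $\Y$ are \emph{bounded} exactly when $X$ and $Y$ are; hence, by hypothesis, $\X$ and $\Y$ are bounded metric subspaces of $\M$, which is precisely the setting of Corollary \ref{theorem2}.

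First I would check that the product map behaves well with respect to the $\ell^\infty$ product metrics. Define $\phi\times\psi\colon X\times Y\to\X\times\Y$ by $(\phi\times\psi)(x,y)=(\phi(x),\psi(y))$. For any $(x,y),(x',y')\in X\times Y$, the $\ell^\infty$ distance on the target is
\[
\max\bigl\{\dGH(\phi(x),\phi(x')),\,\dGH(\psi(y),\psi(y'))\bigr\}
=\max\bigl\{d_X(x,x'),\,d_Y(y,y')\bigr\},
\]
where the equality uses that $\phi$ and $\psi$ are isometric embeddings. The right-hand side is exactly the $\ell^\infty$ distance between $(x,y)$ and $(x',y')$ in $X\times Y$, so $\phi\times\psi$ is a surjective isometry from $X\times Y$ onto $\X\times\Y$, both equipped with their $\ell^\infty$ product metrics.

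Next I would invoke Corollary \ref{theorem2}: since $\X$ and $\Y$ are bounded subspaces of $\M$, there is an isometric embedding $\iota\colon\X\times\Y\to\M$ of the $\ell^\infty$ product space into $\M$. Composing, the map $\iota\circ(\phi\times\psi)\colon X\times Y\to\M$ is an isometric embedding, because a composition of isometric embeddings is again an isometric embedding. This proves the corollary.

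Since each step merely chains together standard facts---that isometric embeddings preserve boundedness, that a product of isometric embeddings is an $\ell^\infty$-isometry, and that isometric embeddings compose---I do not anticipate a genuine obstacle here; all of the substantive work has already been carried out in Theorem \ref{theorem1} and Corollary \ref{theorem2}. The only point requiring care is to record that the hypothesis ``bounded'' is used precisely to meet the requirement of Corollary \ref{theorem2}, whose own proof relies (via $\dGH(Z,Z')\geq\tfrac12|\diam(Z)-\diam(Z')|$) on the fact that a bounded subspace of $\M$ is contained in some $\M_{\leq r}$.
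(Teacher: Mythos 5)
Your proposal is correct and follows essentially the same route as the paper: the paper's (very terse) proof simply observes that $X$ and $Y$ are isometric to bounded metric subspaces of $\M$ and then implicitly invokes Corollary \ref{theorem2}, which is exactly your reduction. Your write-up just makes explicit the details the paper leaves unstated---that the images are bounded, that $\phi\times\psi$ is an isometry for the $\ell^\infty$ product metrics, and that isometric embeddings compose.
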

In particular, we can also show Proposition \ref{EBs2GHs} by using this corollary since a bounded subspace of $\R^1$ can be isometrically embedded into $\M$.

\section{Preliminaries}
Let $X,Y$ be sets, and $R$ a subset of $X\times Y$.
For $A\subset X$, we define $R[A]$ as an {\it image} of $A$:
\[
R[A]=\Set{y\in Y}{\mathrm{there\ exists}\ a\in A\ \mathrm{such\ that}\ (a,y)\in R },
\]
and we define $R^{-1}$ as an {\it inverse} of $R$:
\[
R^{-1}=\Set{(y,x)}{(x,y)\in R}\subset Y\times X.
\]
Thus, an {\it inverse image} $R^{-1}[B]$ is also defined similarly.\\

Using this notation, we write $\GHC(X,Y)$ for the system
of all subsets $R$ of $X\times Y$ satisfy $R[\{x\}]\neq\emptyset$ and $R^{-1}[\{y\}]\neq\emptyset$ for all $x\in X$ and $y\in Y$.
In addition, we define $\dis(R)$ as the {\it distortion} of $R\in\GHC(X,Y)$ if $X, Y$ are metric spaces endowed with distance functions $d_X,d_Y$ respectively:
\[
\dis(R)=\dis(X,Y;R)=\sup\Set{|d_X(x,\xi)-d_Y(y,\eta)|}{(x,y),(\xi,\eta)\in R}.
\]

As described above, the Gromov-Hausdorff space is a metric space consisting of a certain set $\M$ and Gromov-Hausdorff distance $\dGH$.
The Gromov-Hausdorff distance is typically defined by using the ``Hausdorff distance'', 
but we can alternatively define the {\it Gromov-Hausdorff distance} between $X,Y\in \M$ as 
\[
\dGH(X,Y)=\frac12\inf_{R\in\GHC(X,Y)}\dis(R)\,.
\]

We define $\diam(X)$ as the {\it diameter} of a metric space $(X,d)$:
\[
\diam(X)=\diam(X,d)=\sup_{x_0,x_1\in X}d(x_0,x_1).
\]
It is straightforward to see $\dGH(X,Y)\leq\max\{\diam(X),\diam(Y)\}$ and
\[
|\diam(X)-\diam(Y)|\leq 2\dGH(X,Y)
\]
for $X,Y\in\M$, which means a map $\diam:\M\to\R$ is 2-lipschitz continuous.
Therefore, if $\X$ is a bounded set of $\M$, we can obtain $\sup_{X\in\X} \diam(X)<\infty$.\\

By the definition of $\M$, for an arbitrary compact metric space $X$, there exists uniquely $Y\in\M$ such that $X$ is isometric to $Y$.
Then we may consider $[X]=Y$.
On the other hand, we can similarly define the Gromov-Hausdorff distance between two compact metric spaces $X, Y$ which do not necessarily satisfy $X, Y\in\M$.
However, we can see easily $\dGH(X,Y)=\dGH([X],[Y])$.
Generally speaking, we can see $\dGH(X, Y)=\dGH(Z, Y)$ if $X$ is isometric to a metric space $Z$.

\section{Proof of Theorem \ref{theorem1}}
We shall prove Theorem \ref{theorem1} and show some corollaries.

\begin{proof}[Proof of Theorem \ref{theorem1}]
First of all, we define a compact metric space $\Sum_{r,n}(X)$ for an arbitrary point $X=(X_k)_{k=1}^n\in(\M_{\leq r})^n$.\\

we fix a two-point set $X_0=\{p^1,p^{-1}\}$ equipped with distance function $d_0$ satisfying $d_0(p^+,p^-)=3r$ and we may assume $X_0,X_1,\ldots,X_n$ are disjoint by replacing isometric copies of them.
For simplicity, we consider $p^1=-p^{-1}$ and $p^{-1}=-p^1$.\\

Then we put $\Sum_{r,n}(X)=\bigcup_{k=0}^n X_k$ as a set and we can define the distance function $d^{r,n}_X$ of $\Sum_{r,n}(X)$ satisfying the following if the distance function of $X_k$ is denoted by $d_k$:
\[
d^{r,n}_X(x_k,x_l)
=\begin{cases}
d_k(x_k,x_l)&\mathrm{if:}k=l,\\
5r|l-k|&\mathrm{if:}k\neq l
\end{cases}
\]
for $x_k\in X_k$, $x_l\in X_l$, $k,l\in\{0,1,\ldots,n\}$.
\begin{figure}[h]
\centering
\includegraphics[width=1\textwidth]{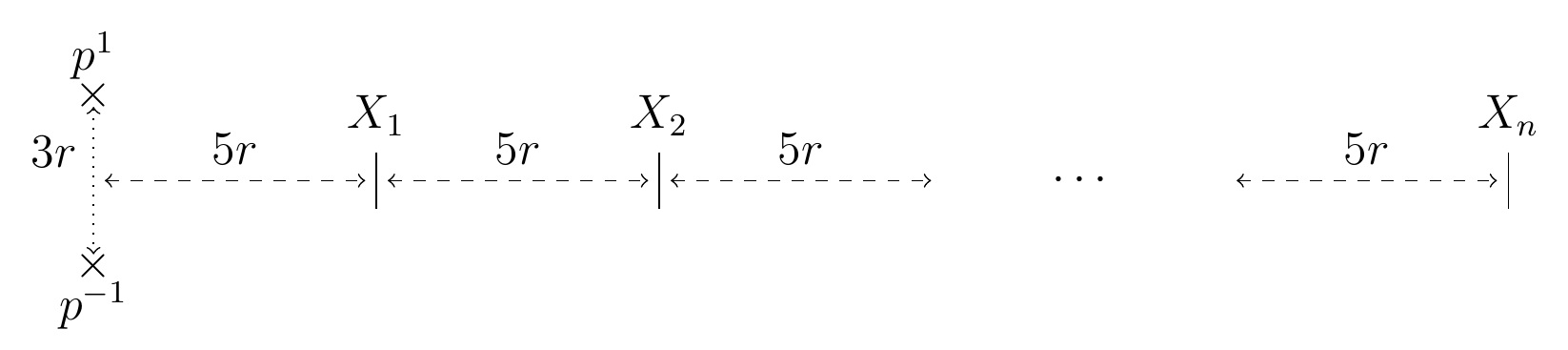}
\caption{The definition of $\Sum_{r,n}(X)$}
\end{figure}
\\

Now we consider $\Sum_{r,n}(X)=[\Sum_{r,n}(X)]\in\M$ for simplicity.
In particular, by mean of $d^{r,n}_X(p^1,x_n)=5rn$ for some point $x_n\in X_n$, we have $\diam(\Sum_{r,n}(X))=5rn$, which means $\Sum_{r,n}(X)\in\B(5rn)$.
Thus, we can define a map $\Sum_{r,n}:(\M_{\leq r})^n\to\B(5rn)$.\\

Let $X=(X_k)_{k=1}^n,Y=(Y_k)_{k=1}^n$ be arbitrary fixed points of $(\M_{\leq r})^n$. 
we put $m=\max_{k=1,2,\ldots,n}\dGH(X_k,Y_k)\leq r$ for short.
We shall prove 
\[
\dGH(\Sum_{r,n}(X),\Sum_{r,n}(Y))=m=\max_{k=1,2,\ldots,n}\dGH(X_k,Y_k).
\]

Firstly, we shall show $\dGH(\Sum_{r,n}(X),\Sum_{r,n}(Y))\leq m$.
For any positive number $\varepsilon$ with $m<\varepsilon$, 
we can obtain $R_k\in\GHC(X_k,Y_k)$ $(k=1,2,\ldots,n)$ satisfying $\dis(R_k)<2\varepsilon$ by the definition of the Gromov-Hausdorff distance.
Then the following set $R$ belongs to $\GHC(\Sum_{r,n}(X),\Sum_{r,n}(Y))$:
\[
R=R_0\cup\bigcup_{k=1}^nR_k\subset\Sum_{r,n}(X)\times\Sum_{r,n}(Y)\,,
\]
where $R_0=\{(p^1,p^1),(p^{-1},p^{-1})\}\subset X_0\times Y_0$ (we put $Y_0=\{p^1,p^{-1}\}$).
It is straightforward to see $\dis(R)<2\varepsilon$, which means $\dGH(\Sum_{r,n}(X),\Sum_{r,n}(Y))<\varepsilon$. 
Indeed, for any points $(x,y)\in R_k$ and $(\xi,\eta)\in R_l$, we obtain
\[
|d^{r,n}_X(x,\xi)-d^{r,n}_Y(y,\eta)|
\leq
\begin{cases}
0&\mathrm{if:}k\neq l,\\
\dis(R_k)&\mathrm{if:}k=l
\end{cases}\ (\dis(R_0)=0)\,.
\]
As a consequence, we have $\dGH(\Sum_{r,n}(X),\Sum_{r,n}(Y))\leq m$.\\

Secondary, we shall show $\dGH(\Sum_{r,n}(X),\Sum_{r,n}(Y))\geq m$ by contradiction.
Thus, we suppose $\dGH(\Sum_{r,n}(X),\Sum_{r,n}(Y))<m$, which means there exists $R\in\GHC(\Sum_{r,n}(X),\Sum_{r,n}(Y))$ such that $\dis(R)<2m\leq 2r$.
Now we divide the argument among $4$ steps to obtain a contradiction.

\begin{step}
In this step, we shall show that $4$ sets $R[\{p^1\}]$, 
$R[\{p^{-1}\}]$, $R^{-1}[\{p^1\}]$, $R^{-1}[\{p^{-1}\}]$ are one-point sets including in $X_0=Y_0=\{p^1,p^{-1}\}$, which means we can define $R^s(p^t)\in\{p^1,p^{-1}\}$ ($s,t\in\{1,-1\}$, $R^1=R$) as the point satisfying $R^s[\{p^t\}]=\{R^s(p^t)\}$.
In addition, we shall show that they satisfy
\[
R(p^1)=R^{-1}(p^1)
\,,\,
R(p^{-1})=R^{-1}(p^{-1})
\,,\,
R(p^1)=-R(p^{-1})
\,.
\]
 
Firstly, suppose, for the sake of contradiction, that $R[\{p^1\}]\not\subset\{p^1,p^{-1}\}$.
It means there exist a number $k=1,2,\ldots,n$ and a point $y_k\in Y_k$ such that $(p^1,y_k)\in R$.
On the other hand, we can take some point $z\in\Sum_{r,n}(Y)$ which satisfying $(p^{-1},z)\in R$.
Since $|d^{r,n}_X(p^1,p^{-1})-d^{r,n}_Y(y_k,z)|\leq\dis(R)$,
we have
\[
d^{r,n}_Y(y_k,z)\leq\dis(R)+d^{r,n}_X(p^1,p^{-1})<2r+3r=5r \,,
\]
thus $z\in Y_k$; otherwise we obtain $d^{r,n}_Y(y_k,z)\geq5r$ by the definition of $d^{r,n}_Y$.
Then we have $d^{r,n}_Y(y_k,z)\leq\diam(Y_k)\leq r$, so we obtain a contradiction:
\[
3r=d^{r,n}_X(p^1,p^{-1})\leq\dis(R)+d^{r,n}_Y(y_k,z)<2r+r=3r \,.
\]
Therefore, we have $R[\{p^1\}]\subset\{p^1,p^{-1}\}$.\\

Secondary, we shall show that $R[\{p^1\}]$ is a one point set, which means there exists $R(p^1)\in\{p^1,p^{-1}\}$ such that $R[\{p^1\}]=\{R(p^1)\}$.
Let $z_0,z_1$ be points of $R[\{p^1\}]$.
Suppose $z_0\neq z_1$ for the sake of contradiction.
Then we have $\{z_0,z_1\}=\{p^1,p^{-1}\}$ by $z_0,z_1\in R[\{p^1\}]\subset\{p^1,p^{-1}\}$, so we have
\[
3r
=d^{r,n}_Y(p^1,p^{-1})
=d^{r,n}_Y(z_0,z_1)
\leq\dis(R)+d^{r,n}_X(p^1,p^1)
<2r\,,
\]
which is a contradiction to $r>0$.
As a consequence, we can define $R(p^1)\in\{p^1,p^{-1}\}$ as an above point.
In addition, we can show that $R[\{p^{-1}\}]$ is a one-point set contained in $\{p^1,p^{-1}\}$ similarly, therefore we can also define $R(p^{-1})\in\{p^1,p^{-1}\}$ satisfying $R[\{p^{-1}\}]=\{R(p^{-1})\}$.\\

In particular, if $R(p^1)=R(p^{-1})$, we have
\[
3r=d^{r,n}_X(p^1,p^{-1})\leq\dis(R)+d^{r,n}_Y(R(p^1),R(p^{-1}))<2r\,,
\] 
which is a contradiction.
It means $R(p^1)=-R(p^{-1})$.
Additionally, the above argument can also be applied to $R^{-1}\in\GHC(\Sum_{r,n}(Y),\Sum_{r,n}(X))$.
Hence, we can also define $R^{-1}(p^1)$, $R^{-1}(p^{-1})$ and they satisfy $R^{-1}(p^1)=-R^{-1}(p^{-1})$.\\

We only need to verify $R(p^1)=R^{-1}(p^1)$.
By the argument above, we can consider $R$, $R^{-1}$ as two maps $R,R^{-1}:\{p^1,p^{-1}\}\to\{p^1,p^{-1}\}$, and see two maps $R$ and $R^{-1}$ are inverses of each other by the definition of $R$, $R^{-1}$.
It means the maps are equal to each other, therefore we have $R(p^1)=R^{-1}(p^1)$.
\end{step}

\begin{step}
\label{setcorr}
In this step, we shall show that there exists a permutation $\sigma$ of $\{1,2,\ldots,n\}$
such that 
\[
R[X_k]\subset Y_{\sigma(k)}
\,,\,
R^{-1}[Y_k]\subset X_{\sigma^{-1}(k)}
\]
for an arbitrary number $k\in\{1,2,\ldots,n\}$.\\

For an arbitrary fixed point $x_k\in X_k$, we can show that all points $y\in R[\{x_k\}]$ do not belong to $Y_0$.
Indeed, we have $d^{r,n}_Y(y,R(p^s))\neq0$ for any $s\in\{1,-1\}$ by calculation:
\[
0<3r=5r-2r\leq5rk-2r<d^{r,n}_X(x_k,p^s)-\dis(R)\leq d^{r,n}_Y(y,R(p^s))\,.
\]
It means $R[X_k]\subset \bigcup_{k=1}^nY_k$.\\

Now, we defined a map $\sigma:\{1,2,\ldots,n\}\to\{1,2,\ldots,n\}$ by the graph
\[
\Set{(k,l)}{\mathrm{there\ exist}\ x\in X_k,\ y\in Y_l\ \mathrm{such\ that}\ (x,y)\in R}\,.
\]
To show the well-definedness of $\sigma$,
It suffices to show $l_0=l_1$ for arbitrary $4$ points $x,\xi\in X_k$, $y\in Y_{l_0},$ $\eta\in Y_{l_1}$ satisfying $(x,y),(\xi,\eta)\in R$. 
Since we have
\[
d^{r,n}_Y(y,\eta)\leq\dis(R)+d^{r,n}_X(x,\xi)<2r+r=3r\,,
\]
we can obtain $|l_0-l_1|<1$ if $l_0\neq l_1$.
However $|l_0-l_1|<1$ means $l_0=l_1$, and it contradics to $l_0\neq l_1$.\\

The above argument can also be applied to $R^{-1}$, therefore we can define a map $\tau:\{1,2,\ldots,n\}\to\{1,2,\ldots,n\}$ similarly.
Moreover, we have
\[
R[X_k]\subset Y_{\sigma(k)}
\,,\,
R^{-1}[Y_k]\subset X_{\tau(k)}
\]
for any $k\in\{1,2,\ldots,n\}$ by the definition of $\sigma,\,\tau$.\\

To show the claim of this step, we shall show that $\sigma$ and $\tau$ are inverses of each other. 
For any $k\in\{1,2,\ldots,n\}$, it is straightforward to see:
\[
X_k\subset R^{-1}[R[X_k]]\subset R^{-1}[Y_{\sigma(k)}]\subset X_{\tau(\sigma(k))}.
\]
It means $\tau(\sigma(k))=k$ by the assumption of disjointness.
Reversing the roles of $R$ and $R^{-1}$, we can also obtain $\sigma(\tau(k))=k$.
\end{step}

\begin{step}
In this step, we shall show that $\sigma$ is the identity map.\\

Firstly, we can obtain $\sigma(1)=1$. 
Indeed, we have
\[
5r\sigma(1)=d^{r,n}_Y(y_1,R(p^+))\leq\dis(R)+d^{r,n}_X(x_1,p^+)<2r+5r=7r
\]
for some points $x_1\in X_1$ and $y_1\in R[\{x_1\}]$, therefore $\sigma(1)\leq1$.\\

Secondary, we shall show $\sigma(k)=k$ for $k\in\{2,3,\ldots,n\}$.
Since $\sigma(k)>1$, we have
\[
5r|\sigma(k)-k|=5r||\sigma(k)-1|-|k-1||=d^{r,n}_Y(y_k,y_1)-d^{r,n}_X(x_k,x_1)\leq\dis(R)<2r
\]
for some points $x_k\in X_k$ and $y_k\in R[\{x_k\}]$.
Then we obtain $|\sigma(k)-k|<1$ , that is $\sigma(k)=k$.
\end{step}

\begin{step}
In this step, we shall show a contradiction.
As a result of step \ref{setcorr}, we can define $R_k\in\GHC(X_k,Y_k)$ by $R_k=R\cap(X_k\times Y_k)$ for any $k\in\{1,2,\ldots,n\}$.
In particular, since $R_k\subset R$, we have
\[
2\dGH(X_k,Y_k)\leq\dis(R_k)\leq\dis(R)<2m\,,
\]
that is $2m<2m$.
It is a contradiction.
\end{step}

As a consequence, we have $\dGH(\Sum_{r,n}(X),\Sum_{r,n}(Y))=m$.
\end{proof}

\begin{proof}[Proof of Corollary \ref{theorem2}]
Since $\sup_{X\in\X}\diam(X)<\infty$, $\sup_{Y\in\X}\diam(Y)<\infty$, there exists $r>0$ such taht $\X,\Y\subset\M_{\leq r}$, which means $\X\times\Y\subset\M_{\leq r}\times\M_{\leq r}$.
Therefore $\Sum_{r,2}:\M_{\leq r}\times\M_{\leq r}\to\B(10r)$ induces an isometric embedding we seek.
\end{proof}

\begin{proof}[Proof of Corollary \ref{theorem3}]
By the assumption, $X,Y$ are isometric to some bounded metric subspaces of $\M$
\end{proof}

\section{Acknowledgments}
\*\indent This research was financially supported by the Kansai University Grant-in-Aid for research progress in graduate course, 2024.
The author acknowledges this support.
Dr.Yoshito Ishiki, Professor Toshihiro Shoda, and Supervisor Atsushi Fujioka reviewed his paper and encouraged him.
The author thanks them for their kindness.

\end{document}